\documentclass[a4paper,11pt]{amsart}
\usepackage{amsmath,amssymb,epsfig,rotating}
\usepackage{amsfonts}
\usepackage{xcolor}

\topmargin 0.3in
\textheight 8.3in
\oddsidemargin 0.2in
\evensidemargin \oddsidemargin
\marginparwidth 0.5in
\textwidth 6in

\usepackage[all]{xy}
\usepackage{pinlabel}

\usepackage[
  pdftitle={On the number of periodic orbits of Morse-Smale flows on graph manifolds},
  pdfauthor={Bijan Sahamie},
  citecolor=blue,
  colorlinks,
  linkcolor=blue,
  urlcolor=red,
  ]{hyperref}

\title{On the number of periodic orbits of Morse-Smale flows on graph manifolds}

\author{Bijan Sahamie}
\address{Mathematisches Institut der LMU M\"unchen, 
Theresienstrasse 39, 80333 M\"unchen Germany}
\email{sahamie@math.lmu.de}
\urladdr{http://www.math.lmu.de/~sahamie}

\theoremstyle{plain} 
\newtheorem{theorem}{Theorem}[section]   
\newtheorem{lem}[theorem]{Lemma}         
\newtheorem{prop}[theorem]{Proposition}
\newtheorem{cor}[theorem]{Corollary}

\theoremstyle{definition}

\newtheorem*{ackn}{Acknowledgments}
\hyphenation{mani-fold}
\hyphenation{homo-logy}
\hyphenation{homo-topy}

\numberwithin{equation}{section}

\newcommand{\Z}{\mathbb{Z}}
\newcommand{\Q}{\mathbb{Q}}
\newcommand{\R}{\mathbb{R}}
\newcommand{\sone}{\mathbb{S}^1}
\newcommand{\stwo}{\mathbb{S}^2}
\newcommand{\sthree}{\mathbb{S}^3}
\newcommand{\lra}{\longrightarrow}

\newcommand{\co}{\colon\thinspace}
\newcommand{\fgn}{\mathbb{F}^g_e}
\newcommand{\pd}{{\rm PD}}
\newcommand{\xs}{X_\Sigma}
\newcommand{\xf}{X_{\rm fiber}}
\newcommand{\xline}{\underline{X}}
\newcommand{\xpar}{X^{\vert\vert}}
\newcommand{\xort}{X^{\perp}}

\newcommand{\ipiece}{Y_{n_i}^{g_i;k_i}}
\newcommand{\frake}{\mathfrak{e}}
\newcommand{\frakc}{\mathfrak{c}}
\newcommand{\frakh}{\mathfrak{h}}
%
%
\DeclareMathSizes{11}{11}{8}{7}

\begin{document}
\begin{abstract} For a closed oriented $3$-manifold $Y$ we 
define $n(Y)$ to be the minimal non-negative number such 
that in each homotopy class of non-singular vector fields 
of $Y$ there is a Morse-Smale vector field with less or equal to 
$n(Y)$ periodic orbits. We combine the 
construction process of Morse-Smale flows given 
in \cite{Dufraine} with handle decompositions of compact 
orientable surfaces to provide an upper bound to the 
number $n(Y)$ for oriented Seifert manifolds and oriented
graph manifolds prime to $\stwo\times\sone$.
\end{abstract}
\maketitle

\fontsize{11}{14}\selectfont

\section{Introduction}\label{parone}
Morse-Smale vector fields have been the focus of intense studies in the past.
They were applied in the investigation of problems of structural stability 
(cf.~for instance \cite{Asimov}).
Especially their dynamical behavior is easy to understand, which makes them
particularly interesting. Now suppose that $Y$ is a graph manifold. 
For an introduction to basic notions on Morse-Smale vector fields we point the
reader to \cite[\S 1]{Yano}.
In
\cite{Yano}, Yano determined which homotopy classes of non-singular vector fields
of $Y$  admit a non-singular Morse-Smale (in the following just nMS) 
representative. As a consequence 
of his work and the work of Wilson from \cite{Wilson}, it follows that for a graph
manifold $Y$ there exists a finite number $n(Y)$ such that in every homotopy
class of non-singular vector fields there is a  Morse-Smale vector field
whose number of periodic orbits is less or equal to $n(Y)$ (see~\cite[Remark~5.2]{Yano}).
Furthermore, Yano remarked there that it would be interesting to determine
these numbers or to find a relation between a homotopy class $h$ of non-singular
vector fields and the number $n(Y,h)$ which is defined as the minimal
number of periodic orbits a nMS vector field in the class $h$ admits.
In \cite[Th\'{e}or\'{e}me~1.1]{Dufraine} the existence of $n(Y)$ was reproved 
by using an essentially different approach. 
In this article, we will give an upper bound for the number $n(Y)$ for both
oriented Seifert manifolds and graph manifolds prime to $\stwo\times\sone$.
\begin{theorem}\label{result} For an oriented Seifert manifold $Y$ with 
genus-$g$ base $\Sigma$, $n$ exceptional orbits, and Euler number $e$, 
we have
$ n(Y)
 \leq
 4g+4n+8
 -
 4\delta_{|e|,1}
 +
 2(1+\delta_{|e|,1})\delta_{g,0}\delta_{n,0}.
$
\end{theorem}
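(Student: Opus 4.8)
The plan is to feed an efficient handle decomposition of the base surface into the construction process of \cite{Dufraine} and then to count the periodic orbits it produces. To begin, I would record the standard picture of $Y$ as the union of the trivial circle bundle $\Sigma_0\times\sone$, where $\Sigma_0$ is $\Sigma$ with $n+1$ open disks removed, together with $n+1$ fibred solid tori $V_0,V_1,\dots,V_n$: the tori $V_1,\dots,V_n$ carry the Seifert invariants of the exceptional fibres, and $V_0$ is glued along a regular fibre so that the total space has Euler number $e$.

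Since $\Sigma_0$ is a compact orientable surface of genus $g$ with $n+1>0$ boundary circles, it is homotopy equivalent to a wedge of $2g+n$ circles, hence admits a handle decomposition with one $0$-handle and $2g+n$ one-handles (and no $2$-handle). Crossing with $\sone$ turns this into a round-handle presentation of $\Sigma_0\times\sone$ in which every periodic orbit is a fibre and, there being no round $2$-handle (no maximum), the flow exits transversally along the boundary tori. I would then run the construction of \cite{Dufraine}: over each handle of $\Sigma_0$ and over each solid torus $V_i$ it installs a local Morse-Smale model — inward across $\partial V_i$, so that the core of $V_i$ becomes a hyperbolic attracting orbit compatible with the boundary behaviour — and it can be steered so that the assembled field (with at worst a few hyperbolic rest points, which do not count, since by Yano some homotopy classes admit no \emph{non-singular} Morse-Smale representative) lies in an arbitrarily prescribed homotopy class. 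The number of periodic orbits is then bounded by a fixed small number per handle of $\Sigma_0$ and per solid torus.

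The delicate part — and the one I expect to be the main obstacle — is to realize the prescribed homotopy class while keeping this count down and the Morse-Smale conditions (hyperbolicity of all orbits, transversality of stable and unstable manifolds) intact. Homotopy classes of non-singular vector fields on $Y$ form a torsor over a $2$-skeleton datum, affine over $H^2(Y;\Z)$, together with a cyclic top-cell datum; the top-cell datum is changed by a twist supported near one periodic orbit, at no extra cost. The $2$-skeleton datum must be adjusted by local moves, but here one cannot simply appeal to rest points: a Morse-Smale field with no periodic orbits is gradient-like and hence realizes only a proper subset of the homotopy classes. Concretely, each of the $2g$ free homological directions arising from the one-handles of $\Sigma_0$, and each of the $n$ exceptional fibres, forces — beyond the orbit already sitting there — a matched pair of hyperbolic periodic orbits; this is where the coefficients $4g$ and $4n$ should come from. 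Collecting the contributions of the $0$-handle, the $2g+n$ one-handles, the $n+1$ solid tori and the top-cell twist produces a bound of the shape $4g+4n+8$.

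It then remains to treat the two special cases. When $|e|=1$ the gluing slope of $V_0$ can be normalised to the trivial one, so $V_0$ together with its adjustments is absorbed into $\Sigma_0\times\sone$ and one saves a solid torus's worth of orbits, giving the term $-4\delta_{|e|,1}$. When $g=n=0$ the surface $\Sigma_0$ is a disk with no one-handles, so there is no place to realize the top-cell data internally and one must reintroduce an auxiliary fibred solid torus — two of them if in addition $|e|=1$ — which accounts for the term $2(1+\delta_{|e|,1})\delta_{g,0}\delta_{n,0}$. Putting these together yields the stated inequality $n(Y)\le 4g+4n+8-4\delta_{|e|,1}+2(1+\delta_{|e|,1})\delta_{g,0}\delta_{n,0}$.
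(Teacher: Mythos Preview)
Your outline has the right shape—build a Morse--Smale model from the Seifert presentation and count—but two load-bearing steps are wrong or missing. First, the parenthetical about ``a few hyperbolic rest points, which do not count'' is a misreading: the fields in question must be \emph{non-singular} Morse--Smale, and homotopy classes with no nMS representative simply do not enter $n(Y)$. In the actual construction one works with a Morse--Smale field $X_\Sigma$ on the base that \emph{does} have singularities, but its lift $X_0=X+\xf$ has none: over a singularity of $X_\Sigma$ the lift reduces to $\xf$, so the fibre there becomes a periodic orbit and must be counted. Second, your claim that the top-cell datum is adjusted ``at no extra cost'' is false; Dufraine's Proposition~5.9 shows this costs exactly $6$ periodic orbits, and that $+6$ is precisely what turns $4g+4n+2$ into $4g+4n+8$.

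More fundamentally, your round-handle presentation of $\Sigma_0\times\sone$ produces periodic orbits that are all \emph{fibres}; none represents a horizontal class $[\beta_i]$, so you have no mechanism to realise the summand $\sum\lambda_i[\beta_i]$ of a prescribed homology class. The paper does not use round handles on $\Sigma_0$. Instead it builds $X_\Sigma$ on the \emph{closed} base with the curves $\beta_1,\dots,\beta_g$ (only $g$, not $2g$) as genuine periodic orbits and with attractor/repellor singularities at the $\pi(\gamma_j)$; this is done by cutting $\Sigma$ along the $\beta_i$, capping off to a sphere, and exhibiting an explicit handle decomposition there. The lift then has $g$ invariant tori, and the crucial technical step (Proposition~\ref{prop:destroy_torus}) destroys each torus so as to create two orbits \emph{already} in the class $\lambda_i[\beta_i]$, bypassing Wada's 5th operation on the $\beta$-curves. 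The honest arithmetic in the generic case is $2g+3(n+1)+(2g+n-1)=4g+4n+2$, then $+6$; the special cases follow because $|e|=1$ forces $\alpha_0=0$ (regular fibres nullhomologous) and because on $\stwo$ one uses the height function—neither is the mechanism you describe.
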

Since graph manifolds are defined by gluing together Seifert pieces 
along toral boundary components (cf.~\cite{Yano}), the techniques 
applied in the proof of
Theorem~\ref{result} can also be applied in the graph manifold setting.
\begin{theorem}\label{thm:graphmanifolds} Let $Y$ be a irreducible 
graph manifold and let 
$Y_1,\dots,Y_l$, $l>1$, be a  
JSJ decomposition of $Y$, where $Y_i$, $i=1,\dots,l$, is a Seifert
manifold over a genus-$g_i$ base with $k_i$ boundary components 
and $n_i$ exceptional orbits. Then the number
$n(Y)$ is less or equal to 
$6+2\cdot\sum_{i=1}^l \bigl(2g_i+2n_i+\delta_{g_i,0}\delta_{n_i,0}+k_i\bigr)$.
\end{theorem}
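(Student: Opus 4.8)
The plan is to run the construction underlying Theorem~\ref{result} piece by piece over the JSJ decomposition and then bound the cost of re-assembly. Fix a homotopy class $h$ of non-singular vector fields on $Y$. Cutting $Y$ along the JSJ tori produces the Seifert pieces $Y_1,\dots,Y_l$, each with non-empty toral boundary; restricting $h$ to $Y_i$ and prescribing a transverse behaviour along a collar of $\partial Y_i$ specifies a homotopy class $h_i$ of non-singular vector fields on $Y_i$ transverse to the boundary. Since the base surface $\Sigma_i$ has genus $g_i$ and $k_i\ge 1$ boundary circles, it carries a handle decomposition with a single $0$-handle, exactly $2g_i+k_i-1$ one-handles, and no $2$-handle. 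Feeding this decomposition into the procedure of \cite{Dufraine}, precisely as in the proof of Theorem~\ref{result} --- each handle thickened in the fibre direction into a round handle carrying a bounded number of periodic orbits, each of the $n_i$ exceptional fibres absorbed by a fixed local model, and the purely planar fibration-trivial case $g_i=n_i=0$ paid for by one extra handle --- produces a non-singular Morse-Smale field on $Y_i$ in the class $h_i$ with at most $2(2g_i+2n_i+\delta_{g_i,0}\delta_{n_i,0}+k_i)$ periodic orbits; the factor $2$ has the same origin as in Theorem~\ref{result}, namely the freedom one must retain to realise the prescribed vertical (Euler) data.

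Next I would glue. Each JSJ torus meets exactly one boundary component on each side, so the co-orientations of the per-piece flows can be matched edge by edge with no global constraint; along each glued torus the two collar models are made to agree, and where the JSJ identification is too twisted for the two linear collar flows to match directly they are interpolated inside a fixed region $T^2\times[0,1]$, the bounded cost of which is already included in the $k_i$-summands. The vector field assembled on $Y$ is then Morse-Smale and restricts on each $Y_i$ to a field in $h_i$, so its homotopy class differs from $h$ only by the portion of the primary and secondary obstructions not yet pinned down on the pieces --- ultimately a single $\Z$-valued ``Hopf-type'' quantity --- and this residual difference is killed by inserting one standard plug carrying at most $6$ periodic orbits, the analogue of the summand $8$ in Theorem~\ref{result} (smaller here because no closed base surface, hence no $0$- and $2$-handle of the base, is present). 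Summing the per-piece and plug contributions yields $n(Y)\le 6+2\sum_{i=1}^l\bigl(2g_i+2n_i+\delta_{g_i,0}\delta_{n_i,0}+k_i\bigr)$.

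I expect the main obstacle to be the gluing bookkeeping: one must (i) identify precisely which homotopy classes of collar flows on $T^2\times[0,1]$ the construction of \cite{Dufraine} can realise on each side of a JSJ torus, (ii) show that any two admissible collar flows are connected by a homotopy supported in the collar that creates no more periodic orbits than the $k_i$-allowance permits, and (iii) check that the leftover global ambiguity in the homotopy class of the assembled field is exhausted by the single $6$-orbit plug rather than demanding a correction at every JSJ torus. A further delicate point, inherited from Theorem~\ref{result}, is that some homotopy classes on a Seifert piece with boundary may admit no non-singular Morse-Smale representative with the required boundary behaviour; one then either routes around Yano's obstruction by a careful choice of collar models or, when this is impossible, allows a controlled singular set --- permissible because $n(Y)$ only requires a Morse-Smale, not necessarily non-singular, representative in each class. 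Irreducibility of $Y$ is used throughout to guarantee that the JSJ pieces are genuine Seifert manifolds glued along incompressible tori, so that Yano's description of the admissible classes applies as stated in \cite{Yano}.
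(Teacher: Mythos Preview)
Your overall architecture matches the paper's: build nMS vector fields on the Seifert pieces, glue, and pay a single global cost of $6$ orbits to adjust the homotopy class. Your per-piece count $2(2g_i+2n_i+\delta_{g_i,0}\delta_{n_i,0}+k_i)$ agrees with the paper's, and you correctly isolate the key accounting point that the $+6$ is applied once rather than once per piece.

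Where the paper differs is in how it organizes the bookkeeping you flag as the ``main obstacle''. Instead of cutting a given homotopy class $h$ into classes $h_i$ on the pieces and then worrying about matching collar models, the paper first builds a global reference vector field $X_0$ on $Y$: on each $Y_i$ it is the fibre-tangent field, bent near $\partial Y_i$ so as to be transverse to the boundary in a standard way. All subsequent constructions are done relative to $X_0$. On each piece one writes homology classes as
\[
\sum_{a=1}^{g_i}\lambda_a[\beta_a]+\sum_{b=0}^{n_i}\alpha_b[\gamma_b]+\sum_{c=1}^{k_i-1}\tau_c[\delta_c],
\]
the $\delta_c$ being curves parallel to all but one boundary component; the per-piece construction is then exactly the one from Theorem~\ref{result} with $k_i-1$ extra invariant tori over the $\delta_c$ (each costing $2$ orbits after destruction). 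Because every piece-field is transverse to its boundary in the same prescribed way, gluing is automatic, and the Morse--Smale condition across the tori is arranged by the paper's earlier Lemma on gluing Morse--Smale fields along transverse boundaries. This sidesteps your issues (i) and (ii) entirely: no homotopy of collar flows is needed.

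For (iii) the paper invokes Yano's criterion: the homotopy classes of non-singular vector fields that admit nMS representatives are exactly those whose homology class lies in the kernel of $\rho_*\co H_1(Y)\to H_1(\mathcal C_Y)$, which in turn is contained in the image of $\bigoplus_i H_1(Y_i)\to H_1(Y)$. Hence every relevant target class $\frakc$ decomposes as $\sum_i\frakc_i$, the per-piece construction realizes each $\frakc_i-\frake_i$ (where $\frake_i$ is the class of $X_0$ on $Y_i$) via the link $L_i$ of attracting/repelling orbits, and reversing $L=\bigcup_i L_i$ produces a field with $[X']=\frakc$. The remaining $d^3$-obstruction is then handled by the single $6$-orbit modification, exactly as in the closed Seifert case. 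So your fallback ``allow a controlled singular set'' is not invoked; the paper simply restricts attention to the homology classes that matter via Yano and realizes them all non-singularly.
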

This statement also provides an upper bound for orientable graph manifolds
prime to $\stwo\times\sone$. Namely, if we define 
$\beta(Y)=2\cdot\sum_{i=1}^l \bigl(2g_i+2n_i+\delta_{g_i,0}\delta_{n_i,0}+k_i\bigr)$ 
for an irreducible graph 
manifold $Y$ (cf.~Theorem~\ref{thm:graphmanifolds})
then the following statement is immediate.
\begin{cor}\label{cor:graphmanifolds} Let $Y$ be an orientable graph 
manifold prime to $\stwo\times\sone$
and denote by $Y_1\#\dots\#Y_n$ a prime decomposition of $Y$. Then the inequality
$n(Y)\leq 6+\sum_{i=1}^n\beta(Y_i)$ holds.
\end{cor}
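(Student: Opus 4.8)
The plan is to deduce the inequality from Theorems~\ref{result} and~\ref{thm:graphmanifolds} together with a gluing construction along the connected-sum spheres whose total periodic-orbit cost is a constant independent of the number $n$ of prime factors. For the topological reduction, note that a prime factor of a graph manifold is again a graph manifold, and that a prime oriented $3$-manifold is either irreducible or $\stwo\times\sone$; as $Y$ is prime to $\stwo\times\sone$, each $Y_i$ is an irreducible graph manifold. Hence every $Y_i$ is either a closed oriented Seifert manifold --- in which case $\beta(Y_i)$ is the defining expression with $l=1$ and $k_1=0$, and $n(Y_i)$ is bounded by Theorem~\ref{result} --- or it admits a non-trivial JSJ decomposition, in which case $\beta(Y_i)$ is as in Theorem~\ref{thm:graphmanifolds} and $n(Y_i)\le 6+\beta(Y_i)$. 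In particular $\beta(Y_i)$ is defined in all cases, and it remains to assemble the relevant Morse--Smale flows over a prime decomposition.

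Fix a homotopy class $h$ of non-singular vector fields on $Y$. Cutting $Y$ along the $n$ connected-sum $2$-spheres exhibits it as $\sthree$ with $n$ disjoint open balls removed, with the $i$-th hole filled by $Y_i\setminus B^3$, and it induces homotopy classes on the punctured summands. The key point is that the constructions underlying Theorems~\ref{result} and~\ref{thm:graphmanifolds} --- the construction of Morse--Smale flows of \cite{Dufraine} combined with handle decompositions of the base surfaces --- produce, for each piece, a Morse--Smale flow on $Y_i\setminus B^3$ which realises the induced class, has at most $\beta(Y_i)$ periodic orbits, and meets the boundary sphere in a fixed flow-box pattern: the further additive terms in the bounds of Theorems~\ref{result} and~\ref{thm:graphmanifolds} are precisely the cost of the concluding ``closing-up'' move, which one does not perform on the individual summands. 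On $\sthree$ one takes the standard Morse--Smale flow with two periodic orbits, the attracting/repelling Hopf pair, and chooses the $n$ removed balls to be flow boxes strung along a single regular orbit, so that $\sthree$ with these balls removed carries a Morse--Smale flow with exactly those two orbits and the flow-box pattern on each boundary sphere. Gluing the $n$ punctured summands into these holes matches the patterns and creates no periodic orbit along the spheres; a single interaction between the summands and the connecting piece then both corrects the global class to $h$ and plays the role of the omitted ``closing-up'' move, at the cost of at most $4$ further periodic orbits. This yields a Morse--Smale representative of $h$ with at most $6+\sum_{i=1}^{n}\beta(Y_i)$ periodic orbits; since $h$ was arbitrary, $n(Y)\le 6+\sum_{i=1}^{n}\beta(Y_i)$.

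The step I expect to be the main obstacle is exactly the uniform bound $6$ on the assembling cost: one has to show that the ``closing-up'' contributions which, for a closed Seifert summand, enter Theorem~\ref{result} as additive terms of the form $8-4\delta_{|e|,1}+\cdots$ can be charged once to the connecting piece rather than once per summand, and that the local modifications needed to realise the prescribed homotopy class and to splice the boundary patterns do not accumulate with the number of prime factors. By contrast, the topological reduction, the well-definedness of $\beta(Y_i)$, and the orbit count of the model flow on $\sthree$ with $n$ balls removed are routine given the earlier part of the paper.
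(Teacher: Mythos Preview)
Your approach is in the right spirit but considerably more elaborate than the paper's, and the part you flag as the ``main obstacle'' is exactly what the paper disposes of by a citation rather than by an explicit construction. The paper's proof is essentially one line: it invokes Yano's observation (\cite[\S2]{Yano}, in particular Proposition~2.8) that one can form connected sums of nMS vector fields and that the homotopical invariants behave additively under connected sum. Given this, one simply takes on each irreducible summand $Y_i$ the nMS vector field produced by the construction underlying Theorem~\ref{thm:graphmanifolds} \emph{before} the final homotopy adjustment (cost $\beta(Y_i)$ periodic orbits), forms the connected sum of these flows, and then performs the homotopy adjustment once on $Y$ (cost $6$). No explicit $\sthree$-with-$n$-balls model, Hopf pair, or flow-box matching is needed.

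Your version reconstructs by hand a special case of Yano's connected-sum operation, which is fine as an exercise but not necessary here; and your accounting ($2$ orbits from the Hopf pair plus ``at most $4$'' for a correcting interaction) is ad hoc and not quite aligned with how the $6$ arises in the paper --- it is purely the cost of the single homotopy-class adjustment from \cite[Proposition~5.9]{Dufraine}, not a mixture of orbits on a connecting $\sthree$ plus a splice correction. The worry you raise about the additive terms $8-4\delta_{|e|,1}+\cdots$ from Theorem~\ref{result} accumulating per summand dissolves once you notice that those terms already decompose as $\beta(Y_i)$ plus the one-time homotopy cost $6$; since the homotopy adjustment is global and done once on $Y$, only a single $6$ survives. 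In short: your outline is workable, but the clean route is to cite Yano for the connected-sum step and avoid the bespoke gluing analysis altogether.
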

In fact, the techniques applied here allow us to determine upper bounds 
for $n(Y,h)$ for every homotopy class $h$ which admits
a nMS representative. This is implicit in the present work but 
not explicitly pointed out, because it is just of mild relevance to the proof
of the statements.
\begin{ackn} We thank Hansj\"org Geiges for pointing our interest
to this question.
\end{ackn}

\section{A Sketch of the Construction}\label{sec:asodc}
Given two nowhere vanishing vector fields $X_1$ 
and $X_2$ on a closed oriented $3$-manifold $Y$, 
there are two obstructions to join $X_1$ 
and $X_2$ by a homotopy through nowhere vanishing vector fields. 
The first obstruction is a class in $H^2(Y;\Z)$ and it is denoted by
$d^2(X_1,X_2)$ (cf.~\cite[\S 4.2]{Geiges}).  
It measures the {\it homotopical distance} of the
vector fields $X_1$ and $X_2$ over the $2$-skeleton of $Y$. This
means, if $d^2(X_1,X_2)$ vanishes, then it is possible to homotope
$X_1$  such that, after the homotopy, it coincides with $X_2$ along
the $2$-skeleton of $Y$. The second obstruction is a class in 
$H^3(\sthree;\Z)$ and denoted by $d^3(X_1,X_2)$. It is defined only
in case $d^2(X_1,X_2)$ vanishes. Then $d^3$ determines whether
the homotopy that joins $X_1$ and $X_2$ over $Y^{(2)}$ can be
extended over the $3$-cells (cf.~\cite[\S 4.2]{Geiges}).\vspace{0.3cm}\\
In \cite{Dufraine}, Dufraine observes that the obstruction class $d^2$
can be expressed in terms of the set
\[
  C_-(X_1,X_2)
  =
 \{p\in Y\,|\, (X_1)_p=-\lambda\cdot(X_2)_p,\,\lambda\in\R\}.
\]
Under some transversality 
assumptions
this set is a codimension-$2$ submanifold of $Y$ and its 
homology class Poincar\'e dual to the obstruction 
class $d^2(X_1,X_2)$ (see~\cite[Lemme~3.2]{Dufraine}). More 
precisely, fixing a trivialization
$\tau$ of $TM$ and a Riemannian metric $g$, the vector fields
$X_1$ and $X_2$ correspond to maps $f_{X_1},f_{X_2}\co M\lra\stwo$.
Define $\Delta=\{(v,-v)\,|\,v\in\stwo\}$, then we demand the
map $(f_{X_1},f_{X_2})\co M\lra\stwo\times\stwo$ to intersect $\Delta$
transversely. Furthermore, we define the 
{\bf homology class of} $X_1$, in symbols $[X_1]$, as the 
homology class of $(f_{X_1})^{-1}(p)$, where $p\in\stwo$ is a regular 
value of $f_{X_1}$. Now suppose that $X_1$ is a nMS vector field 
in a homology class $\frake\in H_1(Y;\Z)$. Furthermore, 
suppose that we obtain $X_2$ from $X_1$ by reversing the 
orientation of a periodic orbit $\gamma$ of $X_1$. The new 
vector field $X_2$ is still Morse-Smale and, in fact, 
\[
 d^2(X_2,X_1)=\pd[C_-(X_2,X_1)]=\pd[\gamma]
\]
(cf.~\cite[Lemme~3.2]{Dufraine} and cf.~Lemma~\ref{lem:transverse}). Recall
that the obstruction $d^2(X_2,X_1)$ can also be written as
 $d^2(X_2,X_1)=\pd[X_2]-\pd[X_1]$ (cf.~\cite[\S 4.2]{Geiges}).

Recall that every Seifert
manifold can be obtained in the following way:
By performing a $(-1/e)$-surgery along a fiber of $\Sigma\times\sone$
we obtain $\fgn$, the $\sone$-bundle over the genus-$g$ base
$\Sigma$ and Euler number $e$. We denote by $\gamma_0$ the
core of the surgery torus. For $r_i\in\Q$, $i=1,\dots,k$, 
such that $r_i=p_i/q_i$ with $p_i\not\in\{-1,1\}$, denote by 
$Y=\fgn(r_1,\dots,r_k)$ the Seifert manifold obtained by performing 
surgeries along $k$ different regular fibers of $\fgn$ with 
coefficients $r_i$. The cores of the surgery tori are called {\bf exceptional
orbits}. We denote them by $\gamma_1,\dots,\gamma_k$. There is
a natural projection map $\pi\co Y\lra\Sigma$. For $i=1,\dots,k$, set 
$p_i=\pi(\gamma_i)$.
By the presentation of oriented Seifert manifolds in terms of surgeries 
 we gave, it is easy to see by a Mayer-Vietoris computation that 
every homology class $\frakc$ can be written as
\[
\frakc=
 \sum_{i=1}^g\lambda_i[\beta_i]
 +\sum_{j=0}^k\alpha_j[\gamma_j],
\]
where the $\beta_i$ are suitable primitive elements on the 
base $\Sigma$ (cf.~also \cite[Lemme~5.3]{Dufraine}). 

Now  suppose we are given 
a Morse-Smale vector field $\xs$ on $\Sigma$ with the
following properties: we have $(\xs)_{p_i}=0$ for $i=0,\dots,k$, and 
for every $\lambda_i\not=0$ the corresponding curve $\beta_i$ is
a periodic orbit of $\xs$. Because Morse-Smale
vector fields exist in abundance on surfaces, it is obvious that
we can find such a vector field. 
Since 
$Y\backslash(\cup_i\nu\gamma_i)$ is a trivial circle bundle we 
also obtain a vector field $X$, there. This vector field can 
be extended over the tubular neighborhoods $\nu\gamma_i$ under
the assumption that the singularities $p_i$ of $\xs$ are either attractive or 
repulsive. The extension will have the property that $\left. X\right|_{\gamma_i}=0$.
The sum $X_0=X+\xf$ is not Morse-Smale, because for 
every periodic orbit $\beta_i$ the vector field
$X_0$ will leave invariant the torus $(\pi)^{-1}(\beta_i)$.
In \cite{Dufraine}, 
a method is sketched to destroy the invariant tori, i.e.~to alter $X_0$ 
in a neighborhood $\nu(\pi)^{-1}(\beta_i)$ of the invariant torus so that 
the new vector field will be Morse-Smale. The destruction of an invariant 
torus creates $2$ additional periodic orbits which lie both in the homology
class $[\beta_i]$.
Thus, after this procedure, the periodic orbits of $X_0$ contain a 
link $L$ such that
\[
 [L]
 =\sum_{\lambda_i\not=0}[\beta_i]
 +\sum_{\alpha_j\not=0}[\gamma_j].
\]
For each $\beta_i$, $\gamma_j$ that appears in this 
equation, we can alter $X_0$ with the 5th operation of Wada
from \cite{Wada}. This operation applied to $\gamma_1$ say
consists of adding two parallel $(p,q)$-cables of $\gamma_1$
to the set of periodic orbits. By choosing $q=\alpha_1$ this
means we obtain a periodic orbit $\widetilde{\gamma}_1$ 
whose homology class equals $\alpha_1[\gamma_1]$. By replacing 
$\gamma_1$ in $L$ by $\widetilde{\gamma}_1$, the homology class of
the new link of periodic orbits fulfills
\[
 [L]
 =\sum_{\lambda_i\not=0}[\beta_i]
 +\alpha_1[\gamma_1]+\sum_{j\not=1,\alpha_j\not=0}[\gamma_j].
\]
Iterating this process, we can change $X_0$ so that it admits
a link of periodic orbits $L$ with $[L]=\frakc$. We construct a
new vector field $X_2$ which is obtained from $X_1$ by reversing
the orientation of the periodic orbits in $L$ (cf.~Lemma~\ref{lem:initial}). 
Then the equality
\[
 d^2(X_2,X_1)=\pd[C_-(X_2,X_1)]=\pd[L]=\pd\thinspace\frakc
\]
holds. 
Moreover, since $C_-(X_1,\xf)$ is empty, we know that 
$d^2(X_1,\xf)=0$. Hence, we have 
\[
 d^2(X_2,\xf)=d^2(X_2,X_1)+d^2(X_1,\xf)=\pd\thinspace\frakc.
\]
We see that for every homology class $\frakc\in H_1(Y;\Z)$ we can construct a
nMS vector field $X$ on $Y$ such that $d^2(X,\xf)=\pd\thinspace\frakc$. 
Furthermore, we can adjust the homotopy class of $X$ without changing 
$d^2(X,\xf)$ (cf.~\cite[Proposition~5.9]{Dufraine}). It is not hard to observe that
this procedure creates $6$ additional periodic 
orbits.

\section{Proof of Theorem~\ref{result}}
We start with the following
observation which can be found in 
\cite[Lemma~3.1]{Yano} and also in \cite[Lemme~5.8]{Dufraine}).
\begin{lem}\label{lem:initial} Given a nMS vector field $X$ with periodic orbit $\gamma$ 
which is either attractive or repulsive then it is possible to alter 
$X$ to a new nMS vector field $X'$ such that it coincides with $X$ 
outside of $\nu\gamma$ and $X'$ has $-\gamma$ as periodic orbit. 
The obstruction class $d^2(X',X)$ equals $\pd[C_-(X',X)]=\pd[\gamma]$.
\end{lem}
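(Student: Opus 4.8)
The plan is to carry out the modification entirely inside a tubular neighborhood $\nu\gamma$, reversing the direction in which $X$ winds around the core circle while leaving the transverse hyperbolic behavior untouched. Since $\gamma$ is attractive or repulsive, a standard linearization along the orbit lets me choose coordinates $(\theta,z)$, with $\theta\in\sone$ and $z\in D^2$, on $\nu\gamma\cong\sone\times D^2$ in which $\gamma=\sone\times\{0\}$ and $X=\partial_\theta+V(z)$, where $V$ is a $\theta$-independent vector field on $D^2$ with an attracting (resp.\ repelling) hyperbolic zero at the origin and, after shrinking $\nu\gamma$, no other zero on $D^2$. This is the step that genuinely uses the hypothesis that $\gamma$ is attractive or repulsive.

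Next I would choose a smooth rotationally symmetric function $\rho$ on $D^2$ equal to $1$ near $\partial D^2$ and equal to $-1$ at the origin, and set $X'=\rho(z)\,\partial_\theta+V(z)$ on $\nu\gamma$ and $X'=X$ elsewhere. Then $X'$ is smooth, agrees with $X$ near $\partial\nu\gamma$, and is nowhere zero: off the core it is nonzero because $V$ does not vanish there, and on the core it equals $-\partial_\theta$. Hence the core traversed with the opposite orientation, which is $-\gamma$, is a periodic orbit of $X'$. I would then check that $X'$ is again nMS. The radial part of $V$ is nonzero away from the core, so every orbit of $X'$ in $\nu\gamma\setminus\gamma$ has strictly monotone distance to the core; thus no orbit is trapped in $\nu\gamma$ and no new periodic orbit is created, and the periodic orbit set of $X'$ is that of $X$ with $\gamma$ replaced by $-\gamma$. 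Because $V$ is unchanged, $-\gamma$ is still hyperbolic, of the same attracting or repelling type; and because $X'=X$ outside $\nu\gamma$ while the forward and backward limit sets of every orbit meeting $\partial\nu\gamma$ are unchanged, the stable and unstable manifolds of the critical elements of $X'$ agree with those of $X$ outside $\nu\gamma$, so the Morse--Smale transversality conditions are inherited.

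Finally I would compute $C_-(X',X)$, the locus where the two fields point in opposite directions. Outside $\nu\gamma$ they coincide, so there is nothing there. Inside $\nu\gamma$ the relation $\rho(z)\partial_\theta+V(z)=-\lambda(\partial_\theta+V(z))$ with $\lambda>0$ forces $(1+\lambda)V(z)=0$, hence $V(z)=0$, hence $z=0$, and then $\rho(0)=-\lambda$ gives $\lambda=1$; so $C_-(X',X)$ is exactly the core circle $\gamma$. Moreover, along $\gamma$ the maps $f_{X'}$ and $f_X$ take the antipodal values corresponding to $-\partial_\theta$ and $\partial_\theta$, and as $z$ leaves the origin in a transverse direction both vary to first order like $z\mapsto dV_0(z)$ in their respective tangent spaces of $\stwo$; since $dV_0$ is invertible, a short linear-algebra check shows $(f_{X'},f_X)$ meets the antidiagonal $\Delta$ transversely along $\gamma$. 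Invoking \cite[Lemme~3.2]{Dufraine} (see also Lemma~\ref{lem:transverse}) then gives $d^2(X',X)=\pd[C_-(X',X)]=\pd[\gamma]$, the orientation bookkeeping being part of that statement. The step I expect to be the main obstacle is the Morse--Smale verification in the middle paragraph, that is, confirming that the interpolation neither creates new periodic orbits nor destroys hyperbolicity of critical elements or transversality of their invariant manifolds; once this is in hand, identifying $C_-(X',X)$ and hence $d^2(X',X)$ is a direct computation in the local model, modulo the transversality input of \cite{Dufraine}.
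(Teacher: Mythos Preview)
The paper does not actually prove this lemma: it attributes the construction and the equality $d^2(X',X)=\pd[\gamma]$ to \cite[Lemma~3.1]{Yano} and \cite[Lemme~5.8]{Dufraine}, and explicitly leaves the verification that $(X',X)$ meets the transversality hypothesis for $C_-$ to the reader. Your argument supplies precisely these omitted details, along the standard lines one would expect from those references, and is essentially correct.

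Two small caveats. First, arranging $V$ to be $\theta$-independent is not automatic from a linearization along the orbit (Floquet theory can force a double cover), but your construction never uses this: everything goes through verbatim with $V=V(\theta,z)$. Second, the claim that the Euclidean distance to the core is strictly monotone along orbits of $X'$ can fail for a general linear sink (e.g.\ $V(z)=Az$ with $A$ upper-triangular, eigenvalues $-1$, large off-diagonal entry); the standard fix is to pass to an adapted inner product in which the symmetric part of $dV_0$ is definite, or to use a Lyapunov function in place of $|z|$. Neither point affects the structure of your proof. Finally, your parenthetical pointer to Lemma~\ref{lem:transverse} is a slip: that is the gluing lemma for nMS fields on manifolds with boundary, not the result identifying $\pd[C_-]$ with $d^2$; the relevant input is solely \cite[Lemme~3.2]{Dufraine}.
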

The fact that  
$d^2(X',X)=\pd[\gamma]$ was given by Yano in 
\cite[Lemma~3.1]{Yano}. In \cite{Dufraine}, this 
statement is connected with $C_-(X',X)$.
Note that a consideration of $C_-(X',X)$ just makes sense
if the pair $(X',X)$ meets the transversality conditions mentioned
in \S\ref{sec:asodc}. So, 
to relate $[C_-(X',X)]$ with $d^2(X',X)$ in this situation, we have 
to prove that these transversality conditions are fulfilled. This
is, in fact, true. We leave this to the interested reader.
\begin{lem}\label{lem:transverse} Suppose we are given two manifolds 
$Y_i$, $i=1,2$, with boundary and Morse-Smale vector fields $X_i$ 
on $Y_i$. Denote by $K_i$, $i=1,2$, a boundary component of $Y_i$ 
and $\phi\co K_1\lra K_2$ a diffeomorphism. Denote by $Y$ the 
manifold obtained by gluing together $Y_i$, $i=1,2$, with $\phi$. 
Then there is a Morse-Smale vector field $X$ on $Y$ such that 
$\bigl.X\bigr|_{Y_i}$ coincides with $X_i$ outside of a small 
neighborhood of $K_i$.
\end{lem}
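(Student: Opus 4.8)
The plan is to put $X$ into a product normal form near the gluing surface, so that the flow crosses a bicollar of it monotonically, and then to buy the Morse-Smale property by a perturbation supported in that bicollar.

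\emph{Normal form near the boundary.} First I would modify $X_i$, keeping it Morse-Smale and changing it only inside a collar of $K_i$, so that $X_i$ becomes transverse to $K_i$ and, in suitable collar coordinates, is the ``straight'' field $\partial_t$ there; moreover I would arrange that $X_1$ exits $Y_1$ along $K_1$ while $X_2$ enters $Y_2$ along $K_2$, with the collar coordinates matched through $\phi$. If the given fields already meet a transversality convention at $\partial Y_i$ with the right directions, this costs nothing; otherwise one applies the standard move that reverses the direction in which a flow crosses a surface, which in the non-singular setting is available whenever $\chi(K_i)=0$ (in particular always when $K_i$ is a torus, the case relevant here) and introduces only finitely many hyperbolic periodic orbits, sitting on the ``turning levels''. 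This is the one step where new orbits may have to be created; everything below is soft.

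\emph{The glued field and its non-wandering set.} After this preparation a neighbourhood of $K:=K_1\cong_\phi K_2$ in $Y$ is a bicollar $K\times(-1,1)$ on which, for $|t|$ near $1$, $X$ is forced to equal the prepared $X_1$ (for $t<0$) or $X_2$ (for $t>0$), both of which equal $\partial_t$ near $t=0$. I define $X=\partial_t+W_t$ on the bicollar, where $W_t$ is a time-dependent vector field on $K$ that vanishes for $|t|\ge\varepsilon$ and is small enough that the $\partial_t$-component stays positive, and $X=X_1$, $X=X_2$ outside the bicollar. Then the flow crosses the bicollar monotonically from the $Y_1$-side to the $Y_2$-side, with holonomy across it an arbitrary prescribed element $\psi\in\mathrm{Diff}_0(K)$ (choose $W_t$ accordingly); in particular every point of the bicollar is wandering, so $\Omega(X)=\Omega(X_1)\sqcup\Omega(X_2)$ together with the finitely many periodic orbits from the previous step, a finite union of hyperbolic critical elements.

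\emph{Transversality of invariant manifolds, and the main obstacle.} Two critical elements lying in the same piece are joined only by orbits that stay in that piece (an orbit entering the bicollar leaves into the other piece and cannot return, by monotonicity), so $W^u\pitchfork W^s$ for such pairs is inherited from $X_i$ being Morse-Smale. For a mixed pair $p\in\Omega(X_1)$, $q\in\Omega(X_2)$ every connecting orbit meets the section $K\times\{-\varepsilon\}$ exactly once; since the flow is a product there, near the section $W^u(p)$ and $W^s(q)$ are the products of their traces in $K$ with the $t$-direction, the trace of $W^s(q)$ on $K\times\{-\varepsilon\}$ being obtained from its trace on $K\times\{\varepsilon\}$ by $\psi^{-1}$, and $W^u(p)\pitchfork W^s(q)$ in $Y$ is equivalent to the transversality of these two traces in $K$. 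Because the evaluation map $\mathrm{Diff}_0(K)\times K\to K$, $(\psi,x)\mapsto\psi(x)$, is a submersion, parametric transversality gives, for generic $\psi$, transversality of all of the finitely many pairs of traces at once; the finitely many pairs involving a turning orbit are dealt with the same way from the explicit local model of the reversal move. With such a $\psi$ the field $X$ is Morse-Smale and, by construction, agrees with $X_i$ off a collar of $K_i$. The genuine obstacle is precisely this transversality step: everything else produces, essentially for free, a flow whose only possible failure of the Morse-Smale condition is a non-transverse intersection $W^u(p)\cap W^s(q)$ across the seam, while the only perturbation compatible with the requirement $X=X_i$ off the collar is a reparametrisation/shear inside the bicollar, so one must check that this family of shears — the orbit of $\mathrm{Diff}_0(K)$ — is rich enough to realise all the needed transversalities simultaneously.
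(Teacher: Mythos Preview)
Your argument is correct and follows essentially the same route as the paper: put the glued field in product form $\partial_t$ on a bicollar of $K$, note that the only possible failure of the Morse--Smale condition is in the transversality of the traces of $W^u(p)$ and $W^s(q)$ on $K$ for critical elements on opposite sides, and repair this by an isotopy of $K$ implemented as a time-dependent perturbation supported in the bicollar. The paper writes the correcting field explicitly as $\partial_t + \frac{d\varphi_t}{dt}$ for a chosen isotopy $\varphi_t$ making the trace curves transverse, where you instead appeal to parametric transversality for the holonomy $\psi\in\mathrm{Diff}_0(K)$; your preliminary direction-reversal step is extra generality the paper does not need, since it takes compatible boundary transversality of the $X_i$ as part of the standing hypothesis.
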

\begin{proof} The vector fields $X_1$ and $X_2$ glue together to a smooth 
vector field $X$
on $Y$, which is not necessarily Morse-Smale. There might be stable and
unstable manifolds which do not intersect transversely. Let 
$\gamma_i$, $i=1,2$, be periodic orbits of $X_i$ and suppose 
that $W^u(\gamma_1)\cap W^s(\gamma_2)$ is non-empty. Since $X_i$ 
is transverse to $K_i$, the intersections 
$W^u(\gamma_1)\cap K_1$ and $W^s(\gamma_2)\cap K_2$ are 
both transverse and, thus, $W^u(\gamma_1)\cap K_1$ is a collection
 $L^1_1,\dots,L^1_{k_1}$ of embedded circles in $K_1$. The same 
 is true for $W^s(\gamma_2)\cap K_2$. Let us denote by 
 $L^2_1,\dots,L^2_{k_1}$ the corresponding intersection. The 
 surfaces $K_i$ correspond to a surface in $Y$ we denote by $K$. 
 Then the following is equivalent: 
 $W^u(\gamma_1)\cap W^s(\gamma_2)$ is transverse if and 
only if the intersections $L^1_i\cap L^2_j$ are transverse in $K$ for all
possible choices of $i$, $j$. This is immediate by the observation that
the transversality condition is moved along integral curves by the flow. 
Since the vector field $X_1$ is transverse to the 
surface $K_1$, there exists a collar neighborhood $(-1/2,1/2]\times K_1$
in which $X_1$ corresponds to the vector field $\partial_t$. Similarly, 
since $X_2$ is transverse to the surface $K_2$, there exists a collar neighborhood
$[1/2,3/2)\times K_2$ in which $X_2$ corresponds to the vector field $\partial_t$.
We glue together the
pieces $Y_1$ and $Y_2$ using these collars. Hence, without loss of
generality we may assume to have a neighborhood of $K$ in $Y$ which
is diffeomorphic to
$(-1/2,3/2)\times K$ such that $X$ corresponds to the vector field $\partial_t$, 
i.e.~the canonical vector field in the first coordinate. Now suppose that 
$L^1_i$ and $L^2_j$  do not intersect transversely for some $i$ and $j$. Then 
there is
an isotopy $\varphi_t$ of the surface $K$ which will make the intersections
transverse by a deformation of $L^1_i$. We can assume that $\varphi_t$ is 
the identity for small $t<\epsilon$ and
that $\varphi_t$ is independent of $t$ for $t>1-\epsilon$.
On the piece $[0,1]\times K$ we define the vector field by
\[
  X'_{(t,\varphi_t(p))}
 =
 \partial_t
 +
  \frac{d\varphi_{t}(p)}{dt}.
\]
By
its definition, the flow $\Phi$ of this vector field fulfills $\Phi_{t}(p)=(t,\varphi_t(p))$
for all $p\in K$ and times $t\in\R$. At $(\epsilon/2,1-\epsilon/2)\times K$
we replace $X$ by $X'$. Now, $W^u(\gamma_1)$ intersects $W^s(\gamma_2)$
transversely: To see this, we just have to check that the
intersection of the set $W^u(\gamma_1)\cap (\{1\}\times K)$ with
$\{1\}\times L^2_j=W^s(\gamma_2)\cap(\{1\}\times K)$ is transverse. But, by construction,
we have
\[
  W^u(\gamma_1)\cap (\{1\}\times K)
  =
 \Phi_1(\{0\}\times L^1_i)
 =
 \{1\}\times\varphi_1(L^1_i)
\]
which intersects $\{1\}\times L^2_j$ transversely.
\end{proof}
We now discuss a method to destroy the invariant 
tori over the $\beta_i$ such that we can spare the 5th Wada 
operation on them (cf.~\S\ref{sec:asodc}).
\begin{prop}\label{prop:destroy_torus} It is possible to destroy 
the invariant torus over $\beta_i$ by introducing two new periodic 
orbits which both represent the homology class $\lambda_i[\beta_i]$ 
such that the new vector field is sill nMS. 
\end{prop}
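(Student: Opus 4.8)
The plan is to carry out the torus destruction of \cite{Dufraine} in a neighbourhood of $\pi^{-1}(\beta_i)$ so that the cabling which produces the class $\lambda_i[\beta_i]$ is absorbed into the destruction itself; no separate application of Wada's fifth operation is then needed. Set $T=\pi^{-1}(\beta_i)$. As $\beta_i$ is a periodic orbit of the Morse--Smale field $\xs$ it is attractive or repulsive on $\Sigma$, so $T$ is a normally hyperbolic invariant torus of $X_0$; after possibly reversing time we may assume it is attracting. Fix a tubular neighbourhood $N=\nu T\cong\sone\times\sone\times(-1,1)$ with coordinates $(x,y,s)$ in which the $x$--circle projects to $\beta_i$, the $y$--circle is the fibre, and $\{s=0\}=T$. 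Then $X_0$ points strictly into $N$ along both boundary tori $\sone\times\sone\times\{\pm1\}$, and the only feature of $X_0$ near $N$ that the surrounding construction uses is precisely this inward behaviour, which is an open condition. Hence we are free to replace $X_0|_N$ by any non-singular field agreeing with $X_0$ near $\partial N$, and if the replacement is Morse--Smale and still inward along $\partial N$, then gluing it back to the rest of $Y$ and applying the argument in the proof of Lemma~\ref{lem:transverse} to the pieces $N$ and $Y\setminus\operatorname{int}N$ yields a Morse--Smale field on $Y$.

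Inside $N$ I would build the following model. Choose two disjoint solid tori $V^s,V^u\subset N$ whose cores carry the class $[\beta_i]$, and in each an embedded $(\lambda_i,0)$-cable of the core, i.e.\ a circle winding $\lambda_i$ times in the $x$--direction with zero net winding in the $y$--direction; call these cables $C^s$ and $C^u$. Having zero net fibre winding, each of them represents $\lambda_i[\beta_i]$ in $H_1(Y;\Z)$. Now define the replacement field so that on $V^s$ it has $C^s$ as a hyperbolic sink, on $V^u$ it has $C^u$ as a hyperbolic saddle with $W^u(C^u)\setminus C^u$ leaving $V^u$ and accumulating on $C^s$, and on $N\setminus(V^s\cup V^u)$ it carries every orbit into $V^s\cup V^u$ while remaining non-singular and inward along $\partial N$. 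The local pictures near $V^s$ and $V^u$ are exactly the model underlying Wada's fifth operation \cite{Wada}: in place of the two closed orbits produced by the plain destruction of $T$ (which carry the class $[\beta_i]$, cf.~\cite{Dufraine}) we insert their $(\lambda_i,0)$-cables. Since this is done inside the single neighbourhood $N$ and replaces those two orbits rather than being added afterwards, only two periodic orbits are created in total.

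It remains to check that the resulting field $X'$ on $Y$ is non-singular Morse--Smale. Non-singularity and hyperbolicity of $C^s$ and $C^u$ are immediate, and the chain-recurrent set of $X'$ inside $N$ is exactly $C^s\cup C^u$: the flow is inward along $\partial N$ and every interior orbit runs into $V^s\cup V^u$ and thence onto $\overline{W^u(C^u)}\ni C^s$, so no further recurrence is created. Transversality inside $N$ is automatic where needed --- $W^s(C^s)$ is open, hence transverse to $W^u(C^u)$, while $W^u(C^u)$ and $W^s(C^u)$ are transverse near $C^u$ and can be kept so by a generic choice in the construction --- and at the interface $\partial N$ any tangency between the stable/unstable manifolds of $X'$ and those of $X_0$ on $Y\setminus\operatorname{int}N$ is removed by a collar isotopy supported near $\partial N$, exactly as in the proof of Lemma~\ref{lem:transverse}. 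The part I expect to be the main obstacle is the explicit construction on $N\setminus(V^s\cup V^u)$: one must funnel every orbit from the two boundary tori into $V^s$ and $V^u$ with the prescribed sink and saddle indices and with $W^u(C^u)$ accumulating on $C^s$, keeping the field non-singular and creating no extra recurrence --- that is, verifying that the plain torus destruction and the cable of Wada's fifth operation can genuinely be packaged inside one tubular neighbourhood of $T$. Granting this, the homology count $[C^s]=[C^u]=\lambda_i[\beta_i]$ and the global Morse--Smale property follow as above.
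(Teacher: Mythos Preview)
Your approach differs from the paper's and, as written, has a real gap.

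The paper does not go through Wada's operation or any cabling. It writes down an explicit vector field on $V=\pi^{-1}(U)\cong\sone\times[-1,1]\times\sone$: one chooses a homeomorphism $\phi$ of the invariant torus $T$ sending the meridian to a $(\lambda_i,1)$-curve, pulls back $\cos(2\pi b)$ by $\phi^{-1}$ to get a function $g$ on $T$, extends $g$ and picks a suitable $f$, and sets $\xline=f\,\xpar-x\,\partial_x+g\,\xort$ with $\xpar=\lambda_i\partial_t+\partial_z$ and $\xort=-\partial_t+\lambda_i\partial_z$. The two periodic orbits are then $(\lambda_i,1)$-curves lying on $T$ itself, and the formula is arranged so that $\xline$ agrees with $X_0$ on $\partial V$; the only non-explicit step is the final appeal to Lemma~\ref{lem:transverse} near the boundary. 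The key idea is simply that the standard torus-destruction model can be carried out in \emph{any} linear coordinate system on $T$, so one chooses coordinates in which the two created orbits already sit in the desired class --- no separate cabling is needed.

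Your proposal tries instead to package a cabling into the destruction, and this is where it breaks. First, an embedded ``$(\lambda_i,0)$-cable'' of a core circle is not a single circle when $|\lambda_i|>1$: a torus curve in the class $(\lambda_i,0)$ consists of $|\lambda_i|$ parallel longitudes, so $C^s$ and $C^u$ as you describe them do not exist as connected periodic orbits. You would need $(\lambda_i,q)$-cables with $\gcd(\lambda_i,q)=1$ and then argue separately that the meridional contribution is trivial in $H_1(Y)$. Second, Wada's fifth operation \emph{adds} two cables to an existing attractive or repulsive orbit without removing the original, so it does not supply the local model you need for \emph{replacing} the plain-destruction orbits by their cables. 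You yourself flag the construction on $N\setminus(V^s\cup V^u)$ as ``the main obstacle'', and indeed that construction is the entire content of the proposition; without it the argument is a plan, not a proof. The paper's coordinate-change on $T$ is exactly what fills this hole, and it does so with a one-line formula rather than a nested cabling construction.
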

\begin{proof} The proof consists of two steps. In the first step 
we give the construction and in the second step we prove that the 
new vector field is still nMS.

Let $\gamma$ be a closed orbit of $X_\Sigma$ on the base space. 
There is a neighborhood $U$ of $\gamma$ in $\Sigma$ such that 
$U\cong\sone\times[-1,1]$ with coordinates $(t,x)$. In these coordinates 
$X_\Sigma$ corresponds to the vector field $\partial_t-x\partial_x$. Hence, 
$V=\pi^{-1}(U)\subset Y$ is diffeomorphic to 
$\sone\times[-1,1]\times\sone$ with coordinates $(t,x,z)$ such 
that $X_0$ corresponds to 
$\partial_t-x\thinspace\partial_x+\partial_z$. In these coordinates 
the invariant torus is $T=\sone\times\{0\}\times\sone$. Now consider 
the vector fields
\[
 \begin{array}{rcl}
  \xpar&=&\lambda_i\partial_t+\partial_z\\
  \xort&=&-\partial_t+\lambda_i\partial_z
\end{array}
\]
and a homeomorphism $\phi\co\sone\times\sone\lra T$ which 
sends a meridian $\mu=\sone\times\{*\}$ to a $(\lambda_i,1)$-curve 
and a longitude $\{*\}\times\sone$ to a $(-1,0)$-curve in $T$. Define 
a smooth function $h\co[0,1]\times[0,1]\lra\R$ by 
$h(a,b)=\cos(2\pi\thinspace b)$ and a function $g\co T\lra\R$ 
by $g:=h\circ\phi^{-1}$. Extend $g$ to a function
\[
 g\co
 \sone\times[-1,1]\times\sone\lra\R
\]
such that $g(t,\pm1,z)=(\lambda_i-1)/(\lambda_i^2+1)$. Furthermore, 
define a function
\[
 f
 \co
 \sone\times[-1,1]\times\sone\lra\R
\]
such that $\left.f\right|_T\equiv1$ and 
$\left.f\right|_{\sone\times\{0\}\times\sone}=1-\lambda g$. With this 
at hand, we consider the vector field
\[
 \xline_{(t,x,z)}=f\thinspace\xpar-x\partial_x+g\thinspace\xort.
\]
This vector field has the following properties: On the torus $T$ the 
flow of the vector field $\xline$ admits two periodic orbits which 
are both $(\lambda_i,1)$-curves. The vector field 
$\xline$ can be extended smoothly 
to the manifold $Y$ by setting $\xline_p=(X_0)_p$ for $p\in Y\backslash V$, because 
for $(t,x,z)\in\sone\times\{\pm1\}\times\sone$ we have
\[
 \xline_{(t,x,z)}=\partial_t-x\partial_x+\partial_z.
\]
In order to see that $\xline$ is nMS, we have to check that the stable 
and unstable manifolds of the periodic orbits of $\xline$ intersect 
transversely. With a small perturbation of $\xline$ in the neighborhood of
the boundary $\sone\times\{\pm1\}\times\sone$ this can be achieved
(see~Lemma~\ref{lem:transverse}).
\end{proof}
\begin{figure}[t!]
\definecolor{myred}{HTML}{B00000}
\definecolor{myblue}{HTML}{0000B0}
\definecolor{mygreen}{HTML}{00B000}
\definecolor{mymagenta}{HTML}{b100b0}
\labellist\small\hair 2pt
\pinlabel{Remove tubular neighborhoods of the $\mu_i$ and cap off with 
disks} [b] at 800 -10
%
%
\pinlabel{{\color{mygreen} $0$}} [t] at 948 85
\pinlabel{{\color{mygreen} $0$}} [t] at 1146 85
\pinlabel{{\color{mymagenta} $0$}} [t] at 1350 85
\pinlabel{{\color{mymagenta} $0$}} [t] at 1550 85
%
\pinlabel{{\color{myred} $2$}} [b] at 948 597
\pinlabel{{\color{myred} $2$}} [b] at 1146 597
\pinlabel{{\color{myblue} $2$}} [b] at 1345 597
\pinlabel{{\color{myblue} $2$}} [b] at 1544 597
%
%
\pinlabel{$1$} [b] at 1050 490
\pinlabel{$1$} [b] at 1250 490
\pinlabel{$1$} [b] at 1450 490
\pinlabel{$1$} [t] at 1050 190
\pinlabel{$1$} [t] at 1250 190
\pinlabel{$1$} [t] at 1450 190
\pinlabel{{\color{myblue} $p_1$}} [b] at 455 597
\pinlabel{{\color{myblue} $p_2$}} [b] at 654 597
\pinlabel{{\color{mymagenta} $p_3$}} [t] at 455 85
\pinlabel{{\color{mymagenta} $p_4$}} [t] at 654 85
\pinlabel{{\color{myred} $\mu_1$}} [r] at 130 620
\pinlabel{{\color{mygreen} $\mu_2$}} [r] at 130 62
\endlabellist
\centering
\includegraphics[width=15cm]{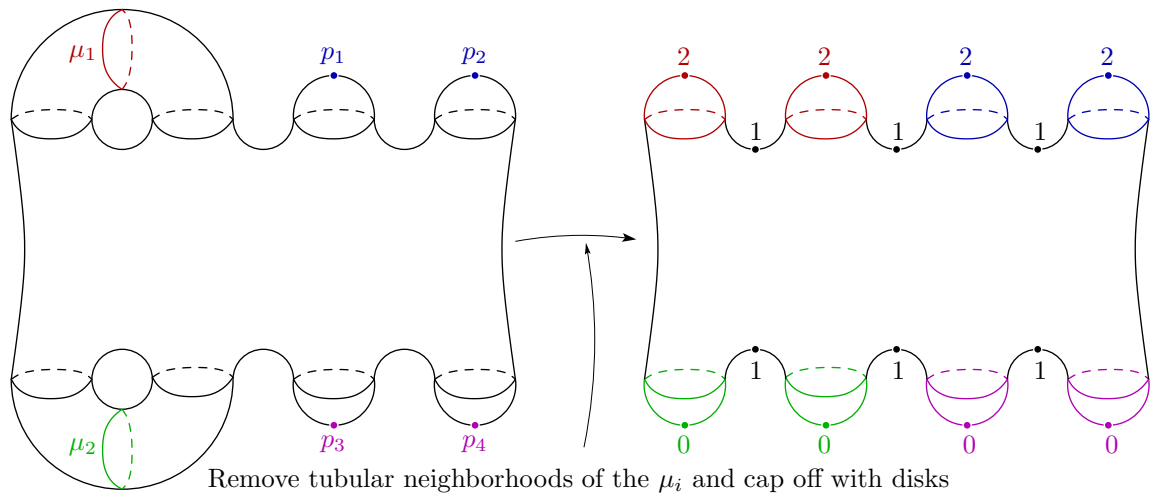}
\caption{We remove tubular neighborhoods of the $\mu_i$, $i=1,\dots,g$, and
then cap off the boundary components with disks. There, we define a Morse 
function $f$ whose gradient admits the singularities as indicated in the 
right of this Figure with the numbers indicating the indices of the singularities.}
\label{Fig:proof}
\end{figure}
\begin{proof}[Proof of Theorem~\ref{result}] 
Given a Seifert manifold
$Y=\fgn(r_1,\dots,r_k)$, pick a homology class $\frakc$ which is
given by
\[
 \frakc
 =
 \sum_{i=1}^g\lambda_i[\beta_i]
 +
 \sum_{j=0}^k\alpha_j[\gamma_j]
 =
 \sum_{i=1}^g\lambda_i[\beta_i]
 +
 \alpha_0[\gamma_0]
 +
 \sum_{j=1}^k\alpha_j[\gamma_j]
 , 
\]
where $\lambda_i$, $i=1,\dots,g$, and $\alpha_j$, $j=0,\dots,k$ are
all elements in $\Z\backslash\{-1,0,1\}$. Let us call these 
classes {\bf maximal}. We separated the summand $\alpha_0[\gamma_0]$
from the other $[\gamma_j]$'s,  because this element just appears for
$e\not=\pm 1$. We now fix a maximal class $\frakc$. For every homology 
class $\frakh\in H_1(Y;\Z)$
we may apply the construction process described in \S\ref{sec:asodc} to 
get a nMS vector field in that homology
class. Its number of periodic orbits shall be denoted by $n(Y,\frakh)$.
Then it is easy to see that
$
 n(Y,\frakc)
 \geq
 n(Y,\frakh)
$
for all $\frakh\in H_1(Y;\Z)$. Furthermore, for every other maximal homology class
$\mathfrak{d}$ we see that $n(Y,\mathfrak{d})=n(Y,\mathfrak{c})$. 
Recall from \S\ref{sec:asodc} that adjusting the homotopy class of a vector field
(while fixing its homology class) creates $6$ additional periodic orbits.
Hence, $n(Y)\leq n(Y,\frakc)+6$ which provides
an upper bound for $n(Y)$. The crucial ingredient is to
generate a Morse-Smale flow on the base $\Sigma$ with periodic orbits in
the homology classes of $\pi(\beta_i)$, $i=1,\dots,g$ and singularities at 
the points $\pi(\gamma_j)$, $j=0,\dots,k$: Recall that a 
nMS vector field can be given by a round handle decomposition. 
Furthermore, observe that
a Morse-Smale vector field without periodic orbits can be viewed as the
gradient of a Morse function, which in turn defines a handle decomposition.
Hence, to provide a Morse-Smale vector field on $\Sigma$ with the
desired non-wandering set, we will cut the surface into two pieces, 
$\Sigma=F_0\cup_\partial F_1$, where $F_0$ is given by a round handle
decomposition and $F_1$ by a handle 
decomposition (or, equivalently, by a Morse function). Hence, we will get 
Morse-Smale vector fields $X_i$, $i=0,1$, such that $X_0$ is non-singular and
$X_1$ has singularities but no periodic orbits. In this way, we have
precise control of the non-wandering set of $X_0\cup_\partial X_1$. To 
provide the upper bound, we have to move through a couple of different cases.
We will discuss the first case thoroughly to point out that the proof
technique works and then discuss the other cases in a brief way:\vspace{0.3cm}\\
\noindent{\sc Case 1~--~$g>0$, $e\not=\pm 1$, $n\geq0$:} The surface $\Sigma$ is
of genus $g$ and, hence, can be written as a connected sum of $g$ tori
$T^2_1\#T^2_2\#\dots\#T^2_g$. Furthermore, $\pi(\beta_i)$, $i=1,\dots,g$, 
sits in the torus component $T^2_i$. Hence, after applying a suitable 
diffeomorphism $\phi\co\Sigma\lra\Sigma$, we may think of $\pi(\beta_i)$ as
a meridian $\mu_i$ of $T^2_i$. Removing tubular neighborhoods $\nu\mu_i$, 
$i=1,\dots,g$, we obtain a sphere with $2g$ holes, $S$ say. The holes
of the surface should be grouped in pairs such that the pairs belong
to the same meridian $\mu_i$. We cap off the boundary components
with disks and obtain a sphere as indicated in Figure~\ref{Fig:proof}. 
We define a Morse function $f$ on this sphere with critical points
of index $0$ or $2$ at the singularities $p_i$, $i=0,\dots,k$, and with
singularities of index $0$ or $2$ at the centers of the capping disks. 
The function should be
defined in such a way that the gradient is Morse-Smale. In other words, 
there should not be any separatrices connecting critical points of
index $1$. To demonstrate 
that this can be realized, we provide a handle decomposition in
Figure~\ref{Fig:proof2}.  This figure shows a handle decomposition of 
the sphere with $1$-handles which are not attached to each other.
\begin{figure}[t!]
\labellist\small\hair 2pt
\endlabellist
\centering
\includegraphics[height=8cm]{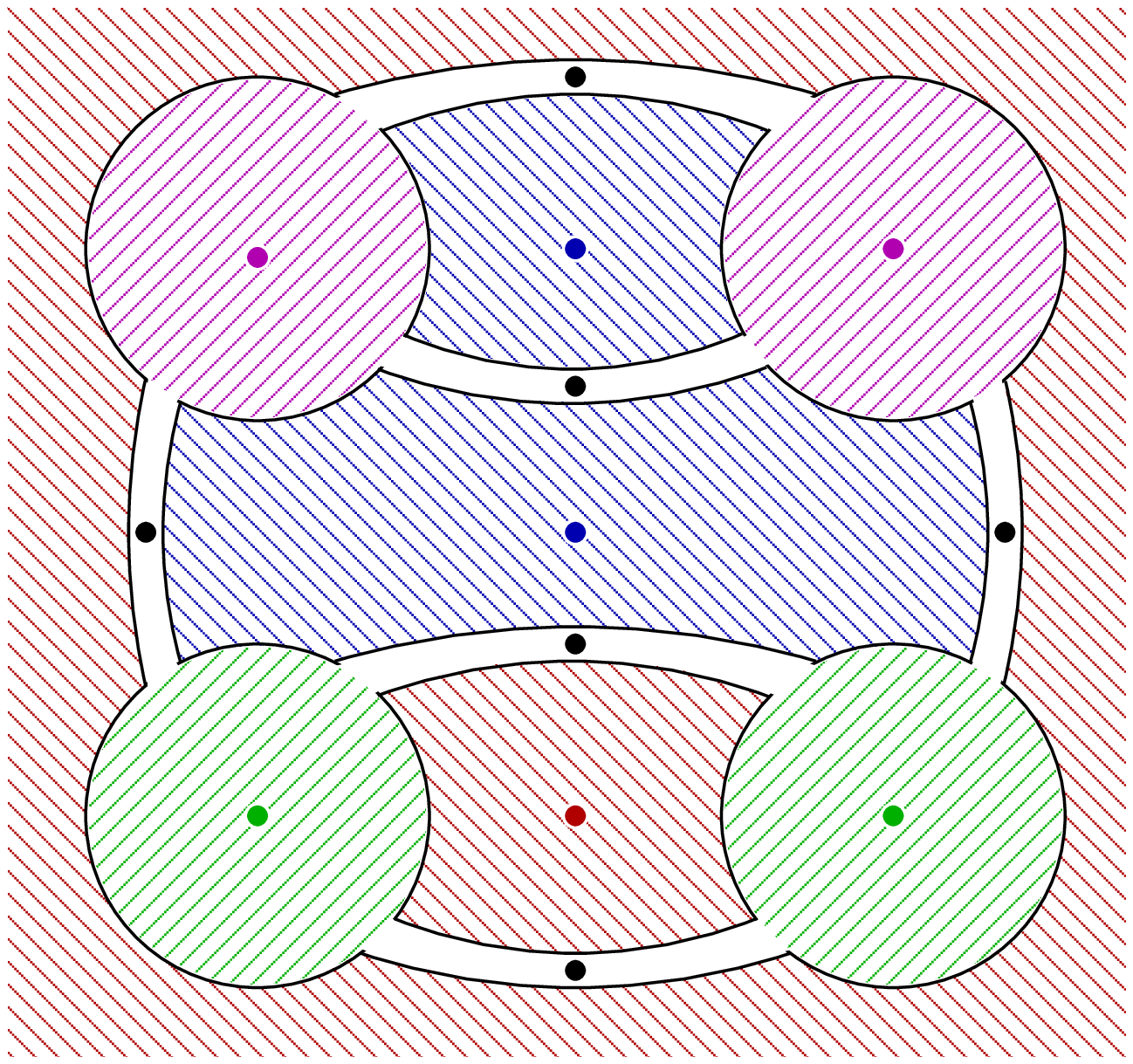}
\caption{A handle decomposition of $\stwo$. The coloring coincides with
the coloring given in Figure~\ref{Fig:proof}. The white strips are the
$1$-handles in this handle decomposition. The associated Morse function
has a Morse-Smale gradient. The points are the singularities of the gradient
where one red point sits at infinity. The indices of the singularities are
given in Figure~\ref{Fig:proof}.
}
\label{Fig:proof2}
\end{figure}
Then we set $\Bigl.X_1\Bigr|_S= \Bigl.(\phi^{-1})_*(\nabla f)\Bigr|_S$.
Observe that $\Sigma\backslash S$ is a union of cylinders, i.e.~of
$2$-dimensional round handles. On these round handles we define $X_2$ to
equal the standard model of a Morse-Smale flow on a round handle: More 
precisely, every component of $\Sigma\backslash S$ is diffeomorphic to
$\sone\times[-1,1]$ with coordinates $t$ and $x$. On each of these 
components we require the vector field $X_2$ to equal $\partial_t-x\partial_x$
if $\mu_i$ is an attracting periodic orbit and $\partial_t+x\partial_x$ if
it is a repelling periodic orbit. The vector field $X_\Sigma$, which is 
obtained by gluing together $X_1$ and $X_2$, is a Morse-Smale vector field with
periodic orbits the $\mu_i$, $i=1,\dots,g$, with singularities
the $\pi(\gamma_j)$, $j=0,\dots,n$, which are either attracting or
repelling, and with $2g+n-1$ singularities which are all saddles. To 
this vector field $X_\Sigma$ we 
apply the algorithm described in the previous section. We first lift
this vector field to a vector field on the Seifert manifold. This lift
admits $g$ invariant tori, $n+1$ singularities which are either attracting
or repelling, and $2g+n-1$ saddle orbits. The destruction of the invariant
tori (in the sense of Proposition~\ref{prop:destroy_torus}) and the 
application of the 5th operation of Wada (cf.~\S\ref{sec:asodc}) leaves us with
$ n(Y,\frakc)=2g+3(n+1)+2g+n-1$
periodic orbits. Finally, we adjust the homotopy class (within
its homology class) which creates $6$ additional periodic orbits. We get
\[
 n(Y)\leq n(Y,\frakc)+6=2g+3(n+1)+2g+n-1+6=4g+4n+8,
\]
which ends the first case.\vspace{0.3cm}\\
\noindent {\sc Case 2~--~$g>0$, $e=\pm1$, $n\geq0$:} This case differs from
the first just by its Euler number. If the Euler number is $\pm1$, then the
regular fibers are nullhomologous. Hence, for every class $\frakc$ the coefficient
$\alpha_0$ will vanish. We proceed as before and generate $X_\Sigma$ with
$g$ periodic orbits, $n$ singularities which are attractors/repellors and
$2g+n-2$ saddles. Then we lift this vector field, destroy the invariant
tori, perform the 5th operation of Wada and get
\[
 n(Y)\leq n(Y,\frakc)+6=2g+3n+2g+n-2+6=4g+4n+4,
\]
which completes the second case.\vspace{0.3cm}\\
{\sc Case 3~--~$g=n=0$, $e\not=\pm1$:} The base space is a sphere and
we need an arbitrary Morse-Smale vector field on the base, i.e.~we put no
requirements on the set of periodic orbits or the set of singularities. However, 
every gradient of a Morse
function on the sphere has at least two singularities. So, we pick
such a gradient and perform the algorithm presented in the previous section.
The vector field lifts to a Morse-Smale field on the Seifert manifold with
$2$ periodic orbits. We have to apply the 5th operation of Wada on one
of these orbits and then we have to adjust the homotopy class (within its
homology class) which generates additional $6$ periodic orbits.
We obtain
$
 n(Y)\leq 3+1+6=10
$.\vspace{0.3cm}\\
{\sc Case 4~--~$g=n=0$, $e=\pm1$:} We pick the same vector field on the
base as in the third case. The lift is Morse-Smale with two periodic
orbits. We adjust the homotopy class which generates $6$ additional periodic
orbits. We obtain $n(Y)\leq 2+6=8$.

\end{proof}

\section{Extension to Graph Manifolds}
Recall that a $3$-dimensional manifold $Y$ is called {\bf graph manifold} if
its prime decomposition consists of manifolds $Y_1,\dots,Y_n$ such that in
every $Y_j$, $j=1,\dots,n$, there exists a minimal collection of disjointly 
embedded tori such that the complement of these 
tori is a disjoint union of Seifert manifolds $\ipiece$, $i=1,\dots,l$. Here, 
$\ipiece$ denotes a Seifert manifold with genus-$g_i$ base with $k_i$
boundary components and $n_i$ exceptional orbits.
Such 
a decomposition into Seifert manifolds is called a {\bf JSJ decomposition}. 
We will first restrict to irreducible graph manifolds $Y$. 
To derive an upper bound we follow the natural approach: We will produce 
nMS vector fields on $Y$ by gluing together nMS vector fields from the Seifert 
pieces $\ipiece$ together. 
There are two constructions we have to provide: We have to generate
a reference vector field to determine the homology classes of vector 
fields (cf.~\S\ref{sec:asodc}). And furthermore, we have to generate
nMS vector fields on the pieces $\ipiece$ in such a way that these
vector fields glue together to a vector field on $Y$ that is nMS.
\subsection{The Reference Vector Field}\label{sec:trvf} Recall that for 
closed Seifert manifolds there
exists a surgical presentation as introduced in \S\ref{sec:asodc}. For a Seifert manifold
with boundary we can also provide such a description in an analogous manner.
Just note, that the base is
a surface with boundary and that all $\sone$-bundles over such
are trivial.
For every Seifert piece $\ipiece$ we pick such a presentation. 
The manifold $\ipiece$ admits a natural vector field $\xf$ which is 
tangent to the fibers. The presentation of $\ipiece$ we have chosen 
induces a preferred collar neighborhood for all of its boundary components.
Given such a boundary component, we have a preferred identification of a
neighborhood with $[0,1]\times\sone$. Denote by $\partial_r$ the vector field
in the direction of the interval. So, in a collar neighborhood of the boundary 
we perturb $\xf$ to
$X_0=f\cdot \xf + g\cdot \partial_r$, 
where $f$ is a non-negative function which is zero in a neighborhood of
the boundary and increases to one as we move away from the boundary.
And $g$ is a non-negative function which behaves in the opposite way
as $f$, i.e.~it is zero away from the boundary and it smoothly increases to one 
as we approach the boundary. We perform this construction with every Seifert
piece and then glue the vector fields together to obtain one on the 
manifold $Y$. We will denote this vector field by $X_0$. This will serve us as
our reference vector field.
\subsection{The Construction Method} Constructing nMS vector fields on Seifert 
pieces with boundary
in principle works the same way as done in the previous section. By a 
Mayer-Vietoris argument it is not hard to see that every homology class
$\alpha$ can be written as
\begin{equation}
 \alpha
 =
\sum_{a=1}^{g_i}\lambda_a\cdot[\beta_a]
+
\sum_{b=0}^{n_i}\alpha_b[\gamma_b]
+
\sum_{c=1}^{k_i-1}\tau_c[\delta_c],
\label{eq:rep}
\end{equation}
where the $\beta_a$ are primitive elements in the homology of the surface,
the $\gamma_b$, $b\not=0$, are the exceptional orbits, $\gamma_0$
is a regular fiber, and the $\delta_c$ are closed curves parallel
to the boundary components of the surface. We proceed as before and
create a Morse-Smale vector field $X_\Sigma$ on the base 
with the points $\pi(\gamma_1),\dots,\pi(\gamma_{n_i})$ as
singularities and the curves $\beta_1,\dots,\beta_{g_i}$ and 
$\delta_1,\dots,\delta_{k_i-1}$ as periodic orbits.
We lift this vector
field with the procedure given in the previous sections 
(using $\left. X_0\right|_{\ipiece}$ instead of
$\xf$) and then destroy the 
invariant tori (in the sense of Proposition~\ref{prop:destroy_torus}) over 
both the $\beta$-curves and the $\delta$-curves. The
destruction is done such that we create periodic orbits whose homology
classes represent $\lambda_a\cdot[\beta_a]$ and $\tau_c\cdot[\delta_c]$.
Then the 5th operation of Wada will allow us to replace 
$\gamma_b$, $b=0,\dots,n_i$,
by a periodic orbit which represents the class 
$\alpha_b\cdot[\gamma_b]$, $b=0,\dots,n_i$. Finally, we make the vector field
transverse to the boundary components like done for the reference vector field.
Performing this construction for every piece $\ipiece$, these vector fields
can be glued together to provide a nMS vector field on $Y$ whose set
of periodic orbits contains a link $L$ consisting of attractive or repulsive
orbits such that $[L]=\frakc$ for every given $\frakc$.
\subsection{The Proofs of the Main Results}
Before delving into the proof we would like to state the following
result of Yano which will be used in the proof of 
Theorem~\ref{thm:graphmanifolds}.
\begin{prop}[Theorem~1 of \cite{Yano}]\label{recover} Let $Y$ be a graph manifold prime to $\stwo\times\sone$
and $\rho\co Y\lra\mathcal{C}_Y$ the natural map into the 
Jaco-Shalen-Johansson complex of $Y$. Then the homotopy class 
of a vector field $X$ admits a (non-singular) Morse-Smale representative if and 
only if  $\rho_*(e(X))$ vanishes in $H_1(\mathcal{C}_Y;\Z)$, where $e(X)$ is
the  Poincar\'{e} dual of the Euler class of the field of $2$-planes orthonormal to $X$.
\end{prop}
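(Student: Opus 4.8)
This is Yano's Theorem~1 (\cite{Yano}), used in the present paper as a black box; for completeness I indicate the shape of the argument. Throughout write $\xi=X^\perp$ for the oriented $2$-plane field orthogonal to $X$, so that $e(X)=\pd(e(\xi))\in H_1(Y;\Z)$. Since $\mathcal C_Y$ is a $1$-complex, $H_1(\mathcal C_Y;\Z)=Z_1(\mathcal C_Y)$ is free abelian, and a $1$-cycle vanishes exactly when each of its edge-coefficients does. As $\rho$ collapses every Seifert piece to its vertex and every JSJ torus $T_e$ to the midpoint of the corresponding edge $e$, the coefficient of $\rho_*(e(X))$ on $e$ equals the intersection number $e(X)\cdot T_e=\langle e(\xi),[T_e]\rangle$. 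Hence $\rho_*(e(X))=0$ if and only if $\langle e(\xi),[T_e]\rangle=0$ for every JSJ torus.

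\emph{Necessity.} Suppose $X'$ is an nMS vector field homotopic to $X$, so $e(\xi')=e(\xi)$ for $\xi'=(X')^\perp$. The plan is to invoke the structural analysis of nMS flows on graph manifolds (\cite{Yano}, building on round-handle theory \cite{Asimov}): after an isotopy, $X'$ respects the JSJ decomposition, each Seifert piece carrying, up to topological conjugacy, the fiberwise-plus-round-handles model of \S\ref{sec:asodc}, which is transverse to its boundary. Once $X'$ is transverse to a JSJ torus $T_e$, the line subbundle $\R X'|_{T_e}$ is a complement to both $\xi'|_{T_e}$ and $TT_e$ inside $TY|_{T_e}$, so $\xi'|_{T_e}\cong TT_e$ as oriented plane bundles and $\langle e(\xi'),[T_e]\rangle=\chi(T_e)=0$. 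Therefore $\rho_*(e(X))=0$.

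\emph{Sufficiency.} Conversely, assume $\langle e(\xi),[T_e]\rangle=0$ for every JSJ torus. Let $X_0$ be the reference field of \S\ref{sec:trvf}; it is transverse to all JSJ tori, so the same computation gives $\rho_*(e(X_0))=0$. The difference $e(X)-e(X_0)$ is $\pd$ of a fixed nonzero integer multiple of the class $\frakc\in H_1(Y;\Z)$ determined by $\pd\,\frakc=d^2(X,X_0)$ (cf.~\S\ref{sec:asodc}); since $H_1(\mathcal C_Y;\Z)$ is torsion-free, the hypothesis becomes $\rho_*\frakc=0$, i.e.\ $\frakc$ has zero intersection with every $T_e$. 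This is precisely the condition that $\frakc$ splits as $\sum_i\frakc_i$ with each $\frakc_i$ carried by a single Seifert piece $Y_i$ and expressible as in \eqref{eq:rep} with all boundary-parallel coefficients $\tau_c$ vanishing (the $\tau_c$ record the intersections with the boundary tori, which must cancel in pairs across each edge). On each $Y_i$ the construction method for Seifert pieces with boundary (\S4) then yields an nMS field realizing $d^2$-class $\frakc_i$ relative to $X_0|_{Y_i}$ and transverse to $\partial Y_i$; by Lemma~\ref{lem:transverse} these glue to an nMS field $X'$ on $Y$ with $d^2(X',X_0)=\pd\,\frakc$, hence $d^2(X',X)=0$. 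The residual $d^3$-obstruction is then killed by the homotopy-class adjustment of \S\ref{sec:asodc}, producing an nMS field homotopic to $X$.

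\emph{Main obstacle.} The delicate point is the necessity direction: that an arbitrary nMS flow on $Y$ can be isotoped to be transverse to all JSJ tori — equivalently, that no periodic orbit and no saddle separatrix is trapped in, or spirals along, a JSJ torus. This is the core of the structural theory of nMS flows on graph manifolds and is exactly where the hypothesis that $Y$ is prime to $\stwo\times\sone$ is used; granting it, sufficiency is essentially bookkeeping with the construction already assembled in \S\ref{sec:asodc} and \S4.
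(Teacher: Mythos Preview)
The paper does not prove this proposition at all: it is quoted verbatim as Yano's Theorem~1 and used as a black box, exactly as you say in your first line. So there is no ``paper's proof'' to compare your sketch against.

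That said, your sketch has two soft spots worth flagging. First, in the sufficiency direction your parenthetical ``the $\tau_c$ record the intersections with the boundary tori'' is off: the $\delta_c$ in \eqref{eq:rep} are boundary-parallel curves in the base, so their lifts are closed curves in $\ipiece$ that do \emph{not} cross $\partial\ipiece$; in fact every closed $1$-cycle in a piece with boundary has zero algebraic intersection with the boundary tori, so the condition $\rho_*\frakc=0$ is simply that $\frakc$ lies in the image of \eqref{vietoris}, with no constraint forcing $\tau_c=0$. The paper's construction explicitly uses nonzero $\tau_c$, and your insistence on $\tau_c=0$ would leave some classes unreachable. This is easy to repair: just drop that clause.

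Second, your necessity argument rests on the assertion that any nMS flow on $Y$ can be isotoped so that it ``respects the JSJ decomposition, each Seifert piece carrying \dots\ the fiberwise-plus-round-handles model of \S\ref{sec:asodc}''. That is a much stronger normal-form statement than transversality to the JSJ tori, essentially a topological classification of nMS flows on graph manifolds, and it is neither proved nor claimed in this paper (nor, as far as I can tell, in Yano). You correctly flag transversality to the $T_e$ as the ``main obstacle'', but the sentence as written overreaches; Yano's actual route to $\langle e(\xi),[T_e]\rangle=0$ goes through the round-handle/indexed-link description of $e(X)$ for nMS flows rather than through a global normal form. Since the present paper only \emph{uses} the result, none of this affects anything downstream, but if you want the sketch to stand on its own you should weaken the necessity step to what is actually needed and cite Yano for it.
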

Furthermore, note that there are also homotopical
invariants for vector fields on manifolds with boundary 
(cf.~for instance \cite[p.~439]{Yano}). In \S\ref{sec:asodc} we briefly
introduced the geometric interpretation of homology classes of vector
fields. This is based on the Pontryagin construction which is described
in \cite[\S 7]{Milnor}.
The Pontryagin construction as described in \cite[\S 7]{Milnor} can be 
adapted to work for the case of manifolds with boundary. Then for
two vector fields $X_1$, $X_2$ on a manifold $Y$ with non-trivial
boundary, the homology class of $C_-(X_1,X_2)$ measures the
homotopical distance away from the $3$-cells of $Y$ as in the closed
case. However, in the case of non-empty boundary, the vector fields
are understood to be in a pre-chosen homotopy class along the boundary
of $Y$ (cf.~\cite[p.~439]{Yano} and \cite[Lemma~1.5]{Yano}). 

Because the homotopical classification of vector fields on manifolds with
boundary works the same way as in the closed case, we can apply
our algorithm to the Seifert pieces $Y_j$ of a JSJ decomposition of a graph
manifold to obtain upper bounds for the $n(Y_j)$.
\begin{proof}[Proof of Theorem~\ref{thm:graphmanifolds}] 
The homology
classes in the kernel of $\rho_*$ are contained in the image of the map
\begin{equation}
 \bigoplus_{i=1}^l H_1(\ipiece;\Z)\lra H_1(Y;\Z)
\label{vietoris}
\end{equation}
given by the obvious Mayer-Vietoris sequence (cf.~\cite[p.~444]{Yano}). Denote by $\frake$ the homology
class of the reference vector field $X_0$ (cf.~\S\ref{sec:trvf}). By the considerations
from above we can glue together nMS vector fields on the pieces to obtain 
a nMS vector field on $Y$. If we do this as before, we can define a nMS vector field
$X$ on $Y$ such that
$C_-(X,X_0)$ is empty and, hence, 
$\frake$ can be written as a 
sum $\frake_i\in H_1(\ipiece;\Z)$, $i=1,\dots,l$ (cf.~Proposition~\ref{recover}).
So, suppose we are given
a class $\frakc\in H_1(Y;\Z)$ which can be written as a sum of classes
$\frakc_i\in H_1(\ipiece;\Z)$, $i=1,\dots,l$. Then by the previous discussion,
we see that on every piece $\ipiece$ there exists a nMS vector field $X_i$ whose
set of periodic orbits contains a link $L_i$ consisting of attracting and
repelling periodic orbits such that $[L_i]=\frakc_i-\frake_i$. We glue the $X_i$ together
to obtain a nMS vector field $X$ on $Y$. Hence, the set of periodic orbits of $X$
contains the link $L=L_1\cup\dots\cup L_l$. Then $C_-(X,X_0)$ is empty and, so, 
they lie in the same homology class. We generate a new nMS vector field by 
reversing the orientation of the periodic orbits contained in $L$. Denote the
new vector field by $X'$. Then, we have 
\[
 [X']=[C_-(X',X_0)]+[X_0]=[L]+\frake=\frakc-\frake+\frake=\frakc.
\]
The maximal
number of periodic orbits will be given when choosing a class 
$\frakc-\frake$ whose presentation in the form of Equation~\eqref{eq:rep} 
has the property that 
$\lambda_a\not\in\{-1,0,1\}$ for all $a=1,\dots,g_i$, 
$\gamma_b\not\in\{-1,0,1\}$ for 
$b=0,\dots,n_i$, and $\delta_c\not\in\{-1,0,1\}$ for $c=1,\dots,k_i-1$. So, a maximal
class in the graph manifold is a sum of maximal classes of the Seifert pieces.
Hence, our previous considerations, i.e.~especially the proof 
Theorem~\ref{result}, provides us with the upper bound
\[
 n(\ipiece)
 \leq
 4g+4n+8+2\delta_{g_i,0}\delta_{n_i,0}
+2(k_i-1)
\]
for every $i=1,\dots,l$.
Observe that in this bound we already included the changes to adapt the homotopy
classes within a fixed homology class. Hence, for $l>1$ we have
\[
n(Y)\leq
6+\sum_{i=1}^l
n(\ipiece)-6,
\]
which is equivalent to
$
 n(Y)
 \leq
6+2\cdot
\sum_{i=1}^l
\bigl(
2g_i+2n_i+\delta_{g_i,0}\delta_{n_i,0}+k_i
\bigr).
$
\end{proof}
\begin{proof}[Proof of Corollary~\ref{cor:graphmanifolds}] This statement immediately
follows from our discussion and the observation that it is possible to define 
connected sums of nMS vector fields on manifolds and that their homotopical 
invariants behave additive under connected sums. This was observed by 
Yano in \cite[\S 2]{Yano} (especially \cite[Proposition~2.8]{Yano}).
\end{proof}

\end{document}